\numberwithin{equation}{section}
\newtheorem{theorem}{Theorem}[section]
\newtheorem{lemma}[theorem]{Lemma}
\newtheorem{cor}[theorem]{Corollary}
\theoremstyle{definition}
\def\<{{\langle}}
\def\>{{\rangle}}
\def\a{{\alpha}}
\def\Z{\mathbb Z}
\def\R{\mathbb R}
\def\T{\mathbb T}
\def\S{{\mathbb S}}
\def\a{\alpha}
\def\t{\tau}
\def\ni{\noindent} 
\begin{document}

\title{Three dimensions of knot coloring}

\author{J. Scott Carter\and Daniel S. Silver 
\and Susan G. Williams}

\maketitle %{\setlength{\linewidth}{2in}

%%%%%%%%%%%%%%%%%%%%%%%%%%%%%% 

\section{Introduction} \label{Intro}

\quad\quad  {\it Color is my day-long obsession, joy and torment.} --
Claude Monet  \bigskip

 A knot is a circle smoothly embedded in 3-dimensional Euclidean space or its compactification, the 3-sphere. Two knots are regarded as the same if one can be smoothly deformed into the other.\footnote{ A finite collection of mutually disjoint knots is called a link. Just as in the case of knots, links are regarded only up to smooth deformation. For the sake of simplicity, we will restrict our attention to knots. However, all of the results here apply equally well to links.}

The mathematical theory of knots emerged from the smoky ruins of Lord Kelvin's  ``vortex atom theory," a hopelessly optimistic theory of matter of the nineteenth century in which atoms appeared as microscopic vortices of \ae ther.
Kelvin was inspired by theorems of Hermann von Helmholtz on vortex motion as well as poisonous smoke-ring laboratory demonstrations of a fellow Scot, Peter Guthrie Tait. (See \cite{silver} for a historical account.)  More than anyone else, Tait recognized the mathematical 
profundity of the nascent subject. He was the author of the first publication with the word ``knot" in its title.

As Tait knew, a knot can be represented by a diagram, a regular 4-valent graph in the plane with a {\it tromp l'oeil} device at each vertex indicating how one arc passes over another,  the ``hidden line" device that has been universally adopted today.  Homeomorphisms of the plane might distort the graph, but they do  not change the knot. Going deeper, a theorem of Kurt Reidemeister from 1926 (also proved independently by J.W. Alexander and his student G.B. Briggs one year later) informs us that two diagrams represent the same knot if and only if one can be converted into the other by a finite sequence of local changes, today called {\it Reidemeister moves}  (see, for example, \cite{cf} \cite{liv}). 

\begin{figure}[htb]
\begin{center}
\vspace{-.15in}
\includegraphics[height=3 in]{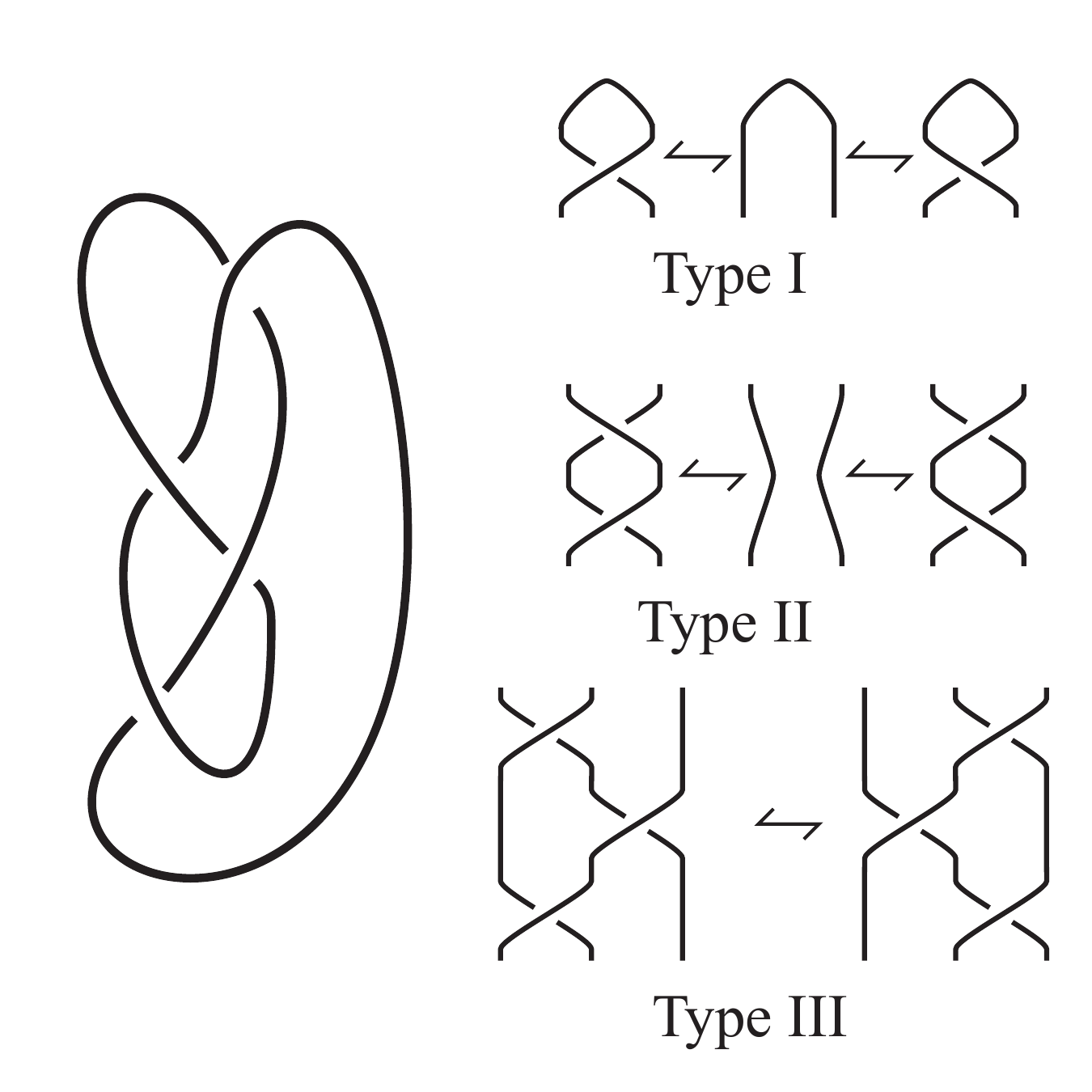}
\caption{Diagram and Reidemeister moves}
\label{Diag.Rmoves}
\end{center}
\end{figure}

Showing that two knots are the same can be relatively easy. However, proving that they are different requires an invariant. A knot invariant is an entity  (number, group, module, etc.) that can be associated to a knot diagram and which is unchanged by any Reidemeister move. 

Reidemeister's theorem converts topological questions about knots into combinatorial problems. Indeed, the first known knot invariants were combinatorial \cite{tait}. As algebraic methods were found, interest in combinatorial invariants waned. However, in the mid 1980's a resurgence of interest followed V.F.R. Jones's discovery and L.H. Kauffman's interpretation of a powerful polynomial knot invariant that could be defined and computed combinatorially \cite{jones}, \cite{kauffman}. Since then, interest in combinatorial knot invariants has remained strong.

Fox $n$-colorings of a knot diagram provide the most elementary but effective combinatorial invariants.
We begin with a brief review of these invariants and their twin siblings,  Dehn $n$-colorings. The first assigns elements of $\Z/n\Z$ (called {\it colors}) to the 1-dimensional arcs of the diagram; the second assigns them to the 2-dimensional regions. In either case, the rules of assignment are determined by the crossings of the diagram. 

Fox $n$-colorings are quite well known, and excellent expositions abound. Dehn $n$-colorings are less well known.\footnote{See pages 185--187 of \cite{kauf}. According to J. Przytycki \cite{prz}, the connection between the two coloring schemes had also occurred some years ago to the late F. Jaeger.}  The next section is intended as a review of the two coloring approaches and the equivalence between them. 

Section 3 describes a third approach in which one colors the 0-dimensional {\sl crossings} of the diagram, and the rules are determined by the regions of the diagram. We obtained it by reformulating ideas of the 1926/27 paper of J.W. Alexander and G.B. Briggs \cite{ab}. For this reason, we refer to the colorings as {\it Alexander-Briggs colorings}. Establishing the relationship between Alexander-Briggs colorings and Fox or Dehn colorings is the goal of the section. 

Colorings organize information in ways that have stimulated new ideas in knot theory. A few such ideas are sketched in the last section.

\section{Fox and Dehn colorings} \label{colorings} Let $D$ be a diagram for a knot $k$, and $n$ any modulus. An {\it arc-coloring} is an assignment of 
{\it colors} $0, 1, \ldots, n-1$ (regarded mod $n$) to the arcs of $D$. 
An arc-coloring is a {\it Fox $n$-coloring} if at every crossing, twice the color of the over-crossing arc is equal to the sum of the colors of the under-crossing arcs, as in Figure \ref{Fox_rule}. 

\begin{figure}%[thb]
\begin{center}
\vspace{-.25in}
\includegraphics[height=1.5 in]{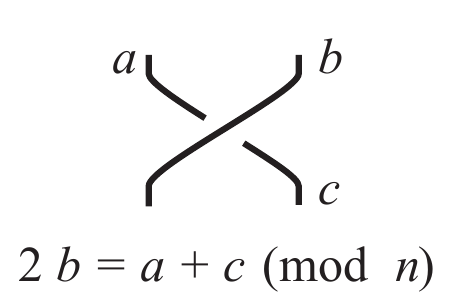}
\caption{Fox $n$-coloring rule}
\label{Fox_rule}
\end{center}
\end{figure}

An {\it $r$-crossing knot} is a knot that has a diagram with $r$ but no fewer crossings. 
Figure \ref{Fox_colorings} gives an example of a 5-coloring of a 4-crossing knot sometimes referred to as Listing's knot \footnote{Johann Benedict Listing (1808-1882) was a student of Gauss and a pioneer in the study of topology. In fact, he is responsible for the name of the subject. Tait learned of Listing's investigation of knots from his life-long friend, the physicist James Clerk Maxwell.} It appears in tables of knots as $4_1$. The figure also shows a 7-coloring of the $9_{42}$.

\begin{figure}
\begin{center}
\vspace{-.15in}
\includegraphics[height=3 in]{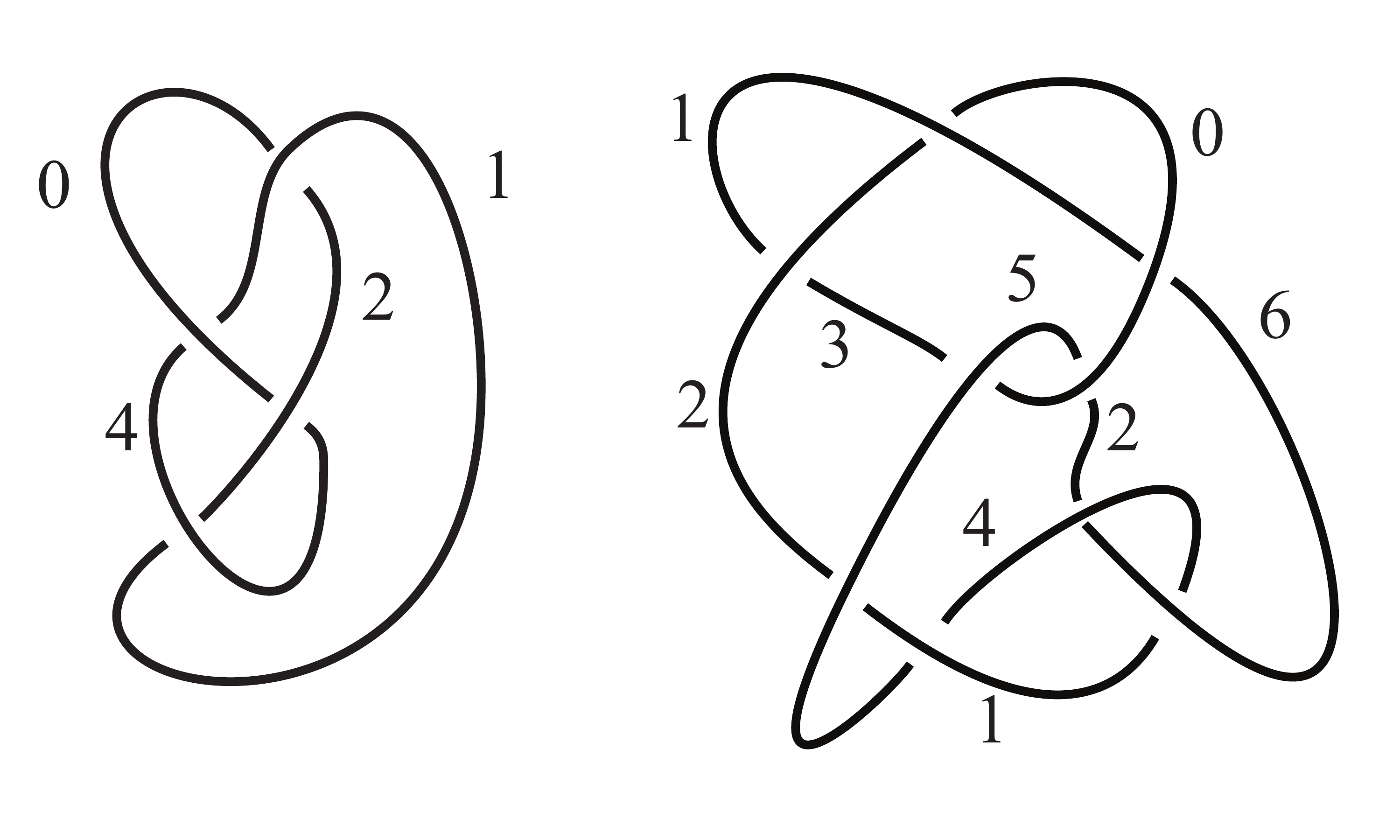}
\caption{Fox $n$-colorings}
\label{Fox_colorings}
\end{center}
\end{figure}

An elementary argument using Reidemeister moves shows that the number of Fox $n$-colorings does not depend on the specific diagram for $k$ that we use. Hence it is an invariant of $k$. Moreover, the linearity of the coloring condition at a crossing implies that the arc-wise sum of two Fox $n$-colorings  is again a Fox $n$-coloring. With a bit more work, one sees that the set of Fox $n$-colorings forms a module over the ring $\Z/n\Z$. The module is also an invariant of $k$.

Every diagram admits $n$  monochromatic Fox $n$-colorings, assigning the same color to each arc. Such arc-colorings are said to be {\it trivial}, and they comprise a submodule. We consider Fox $n$-colorings modulo trivial Fox $n$-colorings. Elements of the quotient module are said to be {\it based}, and they are uniquely represented by Fox $n$-colorings in which an arbitrary but fixed arc, called a {\it basing arc},  is colored by $0$. 

The earliest mention of such invariants appeared in an exercise of the textbook of R.H. Crowell and R.H. Fox \cite{cf} (see pages 92--93). Fox, who was interested in algebraic invariants, recognized that based Fox $n$-colorings are in one-to-one correspondence with homomorphisms from the fundamental group $\pi=\pi_1(\S^3 \setminus k)$ to the dihedral group $D_{2n}= \< \a, \t \mid \t^2, \a^n, (\t \a)^2\>$. 
The correspondence relies on the Wirtinger presentation of $\pi$: 
\begin{equation}\label{wirtinger} \pi=\<x_0, x_1, \ldots, x_m \mid r_1, \ldots, r_m\>. \end{equation}
Here $x_0, \ldots, x_m$ correspond to the arcs of the diagram, having oriented each component. (The colorings will be independent of the orientation.) At each crossing we have a relation of the form 
$x_i x_j = x_k x_i$, where $x_i$ corresponds to the over-crossing arc while $x_j$  is the under-crossing arc on the left as we 
travel above in the preferred direction and $x_k$ is the arc on the right. Any one relation is a consequence of the remaining relations, and hence one relation is omitted from the presentation (\ref{wirtinger}), as in Figure \ref{Wirt}. We remind the reader that $\pi$ is the free group on the generators modulo the smallest normal subgroup containing the set of relators $x_ix_jx_i^{-1}x_k^{-1}$. 

\begin{figure}
\begin{center}
\vspace{-.25in}
\includegraphics[height=2 in]{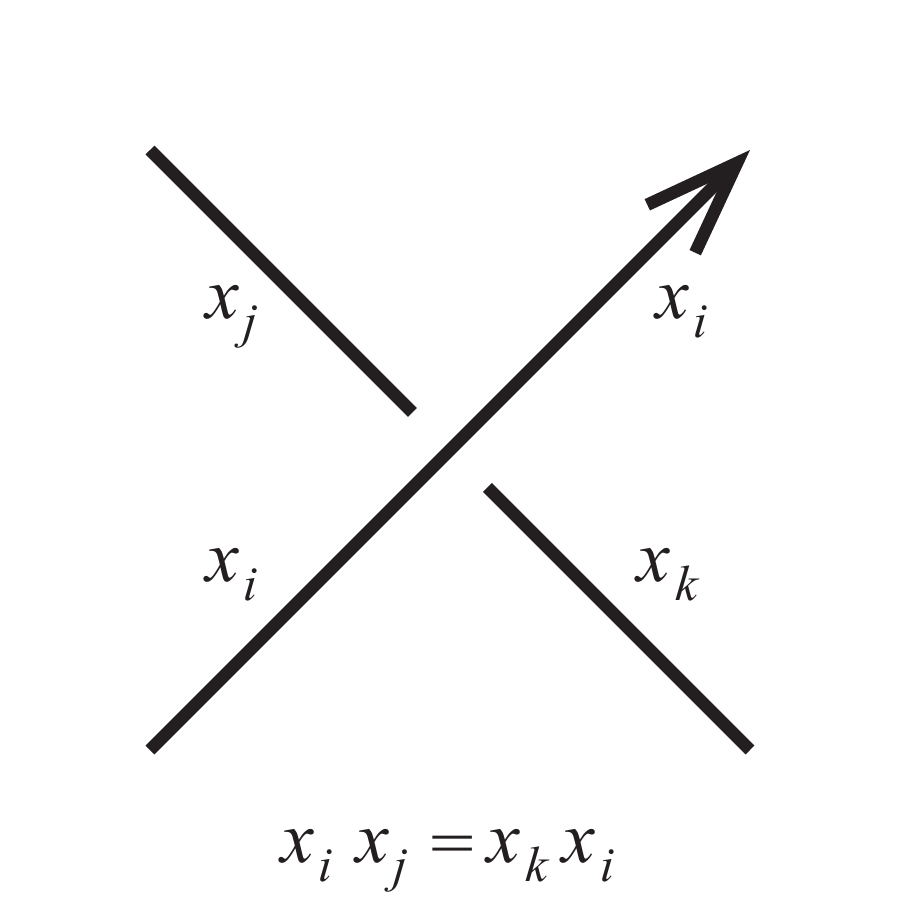}
\caption{Wirtinger relation}
\label{Wirt}
\end{center}
\end{figure}

One easily verifies that given a nontrivial based Fox $n$-coloring of the diagram, the mapping 
that sends each generator $x_i$ to the reflection $\a^{c_i} \t \in D_{2n}$, where $c_i$ is the color assigned to the $i$th arc, determines a nontrivial homomorphism from $\pi$ to $D_{2n}$. Conversely, any nontrivial homomorphism arises in this manner. 

Wirtinger presentations are the most commonly used knot group presentations today. However, about the year 1910, Max Dehn introduced another presentation that has advantages for both combinatorial and geometric group theory. Generators of the Dehn presentation correspond to the regions of a knot diagram, with some region $R^*$ being associated with the identity element. For notational convenience, we use the same letter to denote a region and its associated generator. Again, relations correspond to crossings. If $R_i, R_j, R_k, R_l$ are the regions at a crossing, as in Figure \ref{Dehn}, then the associated relation is $R_i R_j^{-1} R_k R_l^{-1}$. (See \cite{ls} for additional information.)

By translating Fox's observations about  the  relationship between $n$-colorings and dihedral group 
epimorphisms, we can discover the idea of a {\it Dehn $n$-coloring}. It is a labeling of the regions of the diagram with elements of $\Z/n\Z$ such that the unbounded region is labeled 0, and if $a, b, c, d$ are assigned respectively to the regions $R_i, R_j, R_k, R_l$ of Figure \ref{Dehn}, we have $a + b = c + d$. Figure \ref{Dehn_colorings} gives an example of a Dehn 5-coloring of Listing's knot as well as a 
7-coloring of $9_{42}$.

\begin{figure}
\begin{center}
\vspace{-0.15in}
\includegraphics[height=2.25 in]{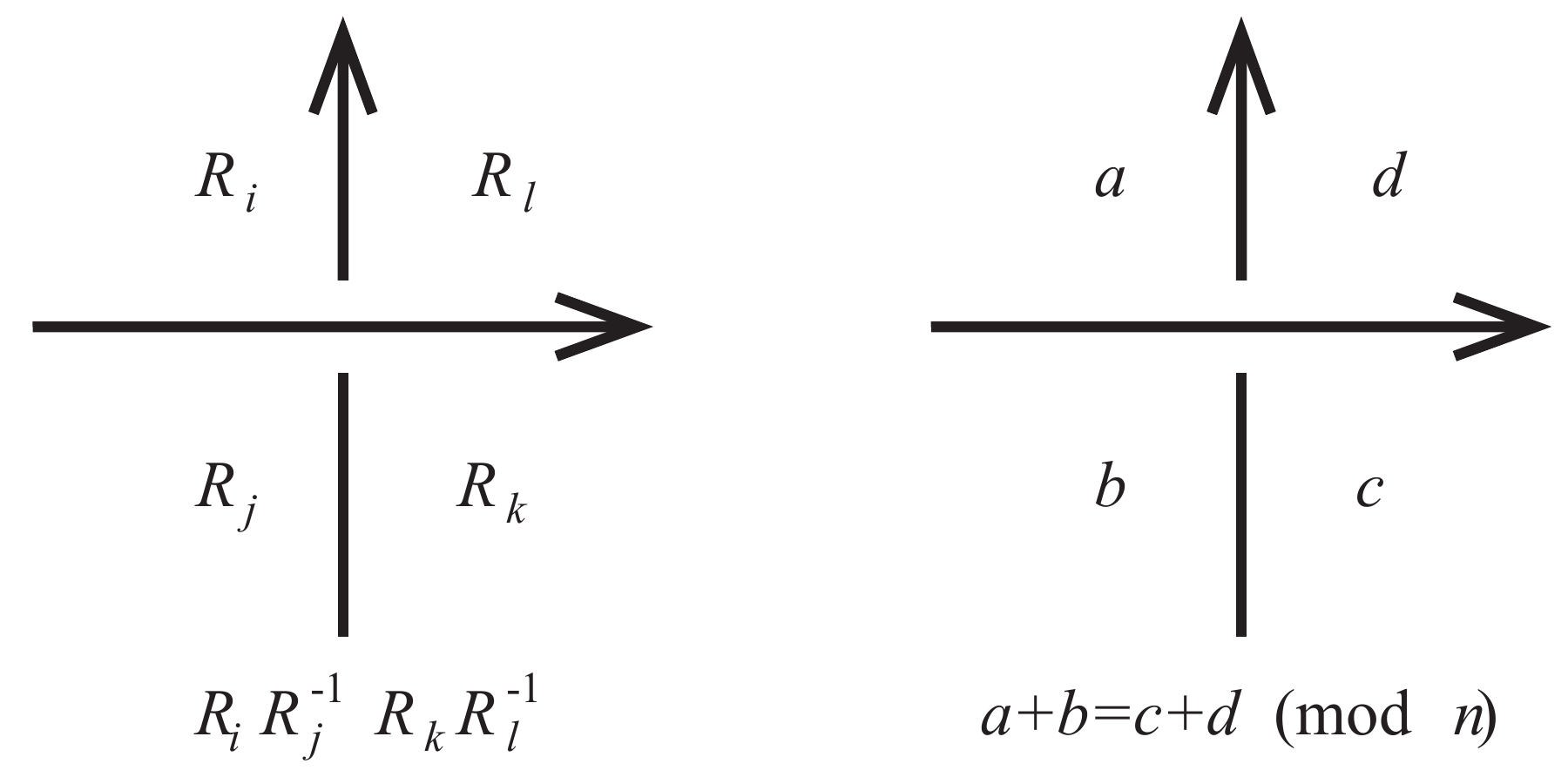}
\caption{Dehn relation and Dehn $n$-coloring rule}
\label{Dehn}
\end{center}
\end{figure}

\begin{figure}
\begin{center}
\vspace{-.15in}
\includegraphics[height=3 in]{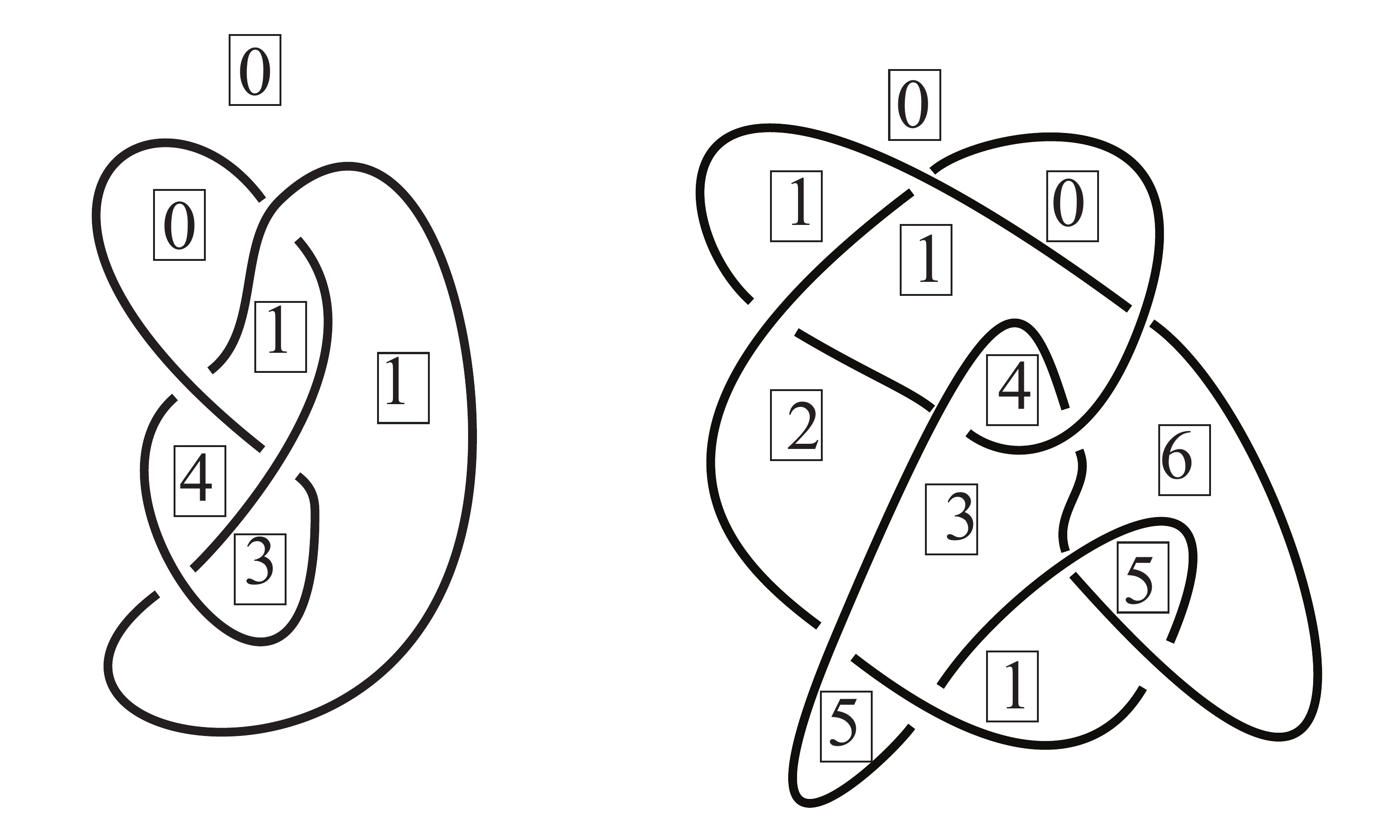}
\caption{Dehn $n$-colorings}
\label{Dehn_colorings}
\end{center}
\end{figure}

Given a Dehn $n$-coloring, it is easy to obtain a Fox $n$-coloring: assign to each arc the sum of the colors of two bounding regions separated by that arc. 

The process of obtaining a Dehn $n$-coloring from a Fox $n$-coloring is more interesting. Given a knot diagram $D$, consider any arc-coloring with elements of $\Z/n\Z$. (Such a coloring need not be a Fox $n$-coloring.) Choose an oriented path $\gamma$ from any region $R$ to any another $R'$ that is {\it generic} in the sense that it crosses arcs transversally and avoids crossings. Assume that $R$ has been colored with an element $a$ of $\Z/n\Z$. If $\gamma$ crosses an arc labeled $b$, then assign to the entered region the color $b-a$. Continue inductively until reaching $R'$. We call the value assigned to $R'$ the result of {\it integration} along $\gamma$. The arc-coloring is {\it conservative} if integration is independent of the path from $R$ to $R'$.

\begin{lemma} \label{conservative} An arc-coloring of $D$ is conservative if and only if it is a Fox $n$-coloring. \end{lemma}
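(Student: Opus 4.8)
The plan is to interpret integration as a \emph{holonomy} and to localize the obstruction to path-independence at the individual crossings, where it will reproduce exactly the Fox relation. The governing observation is that the integration rule, which replaces a running color $a$ by $b-a$ upon crossing an arc labeled $b$, is an \emph{involution} in $a$.

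First I would use this involutivity to establish homotopy invariance. If a generic path is deformed so as to grow a ``finger'' that crosses a single arc and immediately recrosses it, the two applications of $a\mapsto b-a$ cancel, so the integrated value is unchanged; a deformation crossing no arc changes nothing. Hence the integrated value along a path depends only on its homotopy class in $\S^2$ minus the set $V$ of crossings. Fixing the base region $R$, integration around loops therefore defines a function on $\pi_1(\S^2\setminus V,R)$. Since a generic loop meets the diagram transversally an even number of times (the diagram is null-homologous in $\S^2$), the composed map along any loop has linear part $+1$, i.e. it is a translation $a\mapsto a+c$; the scalar $c\in\Z/n\Z$ is independent of the starting color, and concatenation of loops adds these scalars. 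Thus $c$ descends to a homomorphism $H_1(\S^2\setminus V)\to\Z/n\Z$, and the arc-coloring is conservative precisely when this homomorphism vanishes.

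Next I would compute the holonomy around a small loop $\ell$ encircling a single crossing. The four edges met by $\ell$, taken in cyclic order, alternate between the over-arc and the two under-arc pieces, so their labels are $x,y,x,z$, where $x$ is the over-arc color and $y,z$ the colors of the two under-arcs. Composing the four involutions in this order sends $a$ to $a+(y+z-2x)$, so the holonomy of $\ell$ is $y+z-2x$. Hence $\ell$ has trivial holonomy if and only if $2x=y+z$, which is exactly the Fox $n$-coloring rule at that crossing. Because $H_1(\S^2\setminus V)$ is generated by the classes of the small loops around the points of $V$, the holonomy homomorphism vanishes identically if and only if it vanishes on each such loop, i.e. if and only if the Fox relation holds at every crossing. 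This yields both implications simultaneously: a Fox $n$-coloring has trivial holonomy around every crossing, hence is conservative; conversely a conservative coloring has trivial holonomy around each crossing, hence satisfies the Fox rule everywhere.

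The step I expect to be most delicate is the reduction to loops around crossings, together with the verification that the holonomy is genuinely homotopy-invariant and additive under concatenation. This is where the orientation-reversing, involutive nature of $a\mapsto b-a$ is essential: it is the evenness of the intersection number of a loop with the diagram that collapses the a priori affine holonomy to a pure translation and makes the assignment a homomorphism to $\Z/n\Z$. The crossing computation itself is routine once the cyclic order over, under, over, under of the four edges is correctly identified; that alternation, forcing the over-arc color $x$ to appear twice, is exactly what produces the coefficient $2$ in $2x=y+z$.
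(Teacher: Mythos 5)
Your proof is correct, and it shares the paper's essential engine---the local computation at a crossing---but packages the global argument quite differently. The paper proceeds by induction on the number of crossings enclosed by a closed path: it checks (as you do, though it leaves the computation implicit) that integration around a small circle $\delta$ about a single crossing returns the initial color exactly when the Fox relation holds there, and then splices $\delta$ onto the given loop $\gamma$ along a thin ribbon to produce a loop enclosing one fewer crossing. Your route replaces this induction with algebraic topology: involutivity of $a\mapsto b-a$ gives homotopy invariance, evenness of the intersection number with the (null-homologous) projection collapses the affine holonomy to a translation, and the resulting homomorphism $H_1(\S^2\setminus V)\to\Z/n\Z$ vanishes if and only if it vanishes on the meridian loops generating $H_1$. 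What your version buys is precision on points the paper treats informally: ``number of crossings enclosed by $\gamma$'' is delicate for non-simple loops (winding numbers enter), whereas your homological reduction handles arbitrary loops uniformly, and you make explicit both the homotopy invariance and the crossing computation $a\mapsto a+(y+z-2x)$ that underlies the coefficient $2$ in the Fox rule. What the paper's version buys is elementarity: the ribbon-splicing induction needs no $\pi_1$, no $H_1$, and no transversality bookkeeping, which suits the expository level of the article. One small point worth making explicit in your write-up: conservativity compares two paths from $R$ to $R'$, so you should note that integration along a reversed path is the inverse map (immediate from involutivity of each elementary step), which is what converts path-independence into vanishing of holonomy on loops.
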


\begin{proof}  Assume that the arc-coloring is a Fox $n$-coloring. It sufficient (and necessary) to prove that integration along any closed path $\gamma$ returns the the initial color of the region. We use induction on the number $N$ of crossings enclosed by $\gamma$. If $N$ is zero, then the result is obvious. Consider a small circle $\delta$ around a crossing that is enclosed by $\gamma$. The claim is easily checked for such a path. Adding $\gamma$ and $\delta$ along the boundary of a thin ribbon (Figure 3) results in another closed path $\gamma'$. Integration along $\gamma$ and $\gamma'$ yield identical results. However, $\gamma'$ encloses $N-1$ crossings. 

Conversely, if the arc-coloring is not a Fox $n$-coloring, then integration along a closed path about some crossing of the diagram will not return the initial color. Hence the arc-coloring is not conservative. \end{proof}

Lemma \ref{conservative} gives a well-defined process for passing from a 
Fox $n$-coloring to a Dehn $n$-coloring. Determine colors for each region by using paths from $R^*$, which is colored trivially. 

We leave it to the reader to check that the process of passing from a Fox $n$-coloring to a Dehn $n$-coloring is the inverse of the process of passing from a Dehn $n$-coloring to a Dehn $n$-coloring. 

Since integration respects the module structures on the sets of Fox or Dehn $n$-colorings, we have shown the following theorem. 

\begin{theorem}\label{FoxDehn} Given any diagram of a knot, integration induces an isomorphism from the module of Fox $n$-colorings to the module of Dehn $n$-colorings. \end{theorem}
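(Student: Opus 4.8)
The plan is to realize integration as an explicit module homomorphism $\Phi$ from Fox $n$-colorings to Dehn $n$-colorings, to produce an explicit candidate inverse $\Psi$ in the opposite direction, and then to verify that both maps are well defined, $\Z/n\Z$-linear, and mutually inverse. I would define $\Phi$ by declaring the distinguished region $R^*$ to carry the color $0$ and assigning to every other region the result of integrating the given arc-coloring along any path from $R^*$; by Lemma \ref{conservative} a Fox $n$-coloring is conservative, so this value is independent of the chosen path and $\Phi$ is well defined. I would define $\Psi$ by sending a Dehn $n$-coloring to the arc-coloring in which each arc receives the sum of the colors of the two regions it separates.

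First I would check that $\Phi$ actually lands in the module of Dehn $n$-colorings. The region $R^*$ is colored $0$ by construction, so only the relation $a+b=c+d$ at each crossing remains to be checked. Once the region colors are known to be well defined, this is immediate: crossing the over-arc on either of its two sides, one sees that both $a+b$ and $c+d$ equal the single color carried by the over-strand, so they agree (see Figure \ref{Dehn}). Dually, I would check that $\Psi$ lands among Fox $n$-colorings. Here the key local fact is that the sum of the two under-arc colors equals $(a+d)+(b+c)=(a+b)+(c+d)$, which by the Dehn relation equals twice the over-arc color, so the Fox condition holds at every crossing.

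Linearity of both maps is then routine. The integration rule sends a region colored $\alpha$ across an arc colored $t$ to the color $t-\alpha$, and this depends linearly on $t$ and $\alpha$ while fixing the base value $0$; hence integrating a sum of arc-colorings yields the sum of the integrals, and likewise for scalar multiples, so $\Phi$ is a $\Z/n\Z$-module homomorphism, and the summing rule defining $\Psi$ is linear for the same reason. To see that they are inverse, for $\Psi\circ\Phi$ note that the two regions abutting any arc receive, under $\Phi$, colors of the form $\alpha$ and $t-\alpha$ whose sum is the original arc color $t$, so $\Psi$ recovers the starting Fox coloring; and for $\Phi\circ\Psi$ note that integrating $\Psi(g)$ outward from $R^*$ reproduces, step by step along any path, exactly the region colors of $g$, since each arc color of $\Psi(g)$ is by definition the sum of its two bounding region colors.

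The substantive point, and the one I expect to be the main obstacle, is the well-definedness of the two constructions rather than the bookkeeping that follows. For $\Phi$ this is precisely the path-independence supplied by Lemma \ref{conservative}. For $\Psi$ it is the observation that an over-arc runs through a crossing and separates two distinct pairs of regions, so that $\Psi$ assigns a single color to each arc only because the Dehn relation forces both pairs to have equal sums. Once these two facts are secured, the containment of $\Phi$ in the Dehn module, the containment of $\Psi$ in the Fox module, and the equality of the composites are all local verifications carried out one crossing at a time, and the theorem follows.
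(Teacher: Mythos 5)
Your proposal is correct and follows essentially the same route as the paper: integration from the base region $R^*$ (well defined by Lemma \ref{conservative}) in one direction, the region-sum map in the other, with linearity and the mutual-inverse property completing the isomorphism. You simply make explicit the local verifications---that integration satisfies the Dehn relation, that the region-sum map satisfies the Fox condition, and that the composites are identities---which the paper asserts or leaves to the reader.
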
 

Recall that monochromatic Fox $n$-colorings of a knot diagram assign the same color, say $a$, to every arc. Under integration, such colorings correspond to Dehn $n$-colorings that assign $0$ and $a$ to the diagram in checkerboard fashion. We will call such colorings {\it trivial}. We can consider Dehn $n$-colorings modulo trivial colorings. As in the case of Fox $n$-colorings, elements of the quotient module are said to be {\it based}. They are uniquely represented by Dehn $n$-colorings that assign $0$  to both the unbounded region and an adjacent bounded region.  The arc separating these regions is a basing arc for the associated Fox colorings. 

\begin{cor} Given any diagram of a knot, integration induces an isomorphism from the module of based Fox $n$-colorings to the module of based Dehn $n$-colorings. \end{cor}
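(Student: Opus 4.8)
The plan is to obtain the corollary as a purely formal consequence of Theorem \ref{FoxDehn}, using the general principle that a module isomorphism carrying one distinguished submodule onto another descends to an isomorphism of the corresponding quotient modules. So the only real work is to check that the integration isomorphism matches up the two submodules of trivial colorings.

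First I would fix notation. Let $F$ denote the $\Z/n\Z$-module of Fox $n$-colorings of the given diagram and $W$ the module of Dehn $n$-colorings, and let $\Phi \colon F \to W$ be the map induced by integration. By Theorem \ref{FoxDehn}, $\Phi$ is a module isomorphism. Inside $F$ sits the submodule $T_F$ of monochromatic (trivial) Fox colorings, and inside $W$ sits the submodule $T_W$ of checkerboard (trivial) Dehn colorings; by definition the based modules are the quotients $F/T_F$ and $W/T_W$.

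The key step is to verify that $\Phi$ carries $T_F$ isomorphically onto $T_W$. The remark preceding the corollary already supplies the essential computation: integration sends the Fox coloring that assigns the constant color $a$ to every arc to the Dehn coloring that assigns $0$ and $a$ in checkerboard fashion. This shows $\Phi(T_F) \subseteq T_W$. Since the assignments $a \mapsto (\text{constant } a)$ and $a \mapsto (\text{checkerboard } 0,a)$ identify both $T_F$ and $T_W$ with $\Z/n\Z$, and $\Phi$ respects these identifications, the restriction $\Phi|_{T_F} \colon T_F \to T_W$ is an isomorphism; in particular $\Phi^{-1}(T_W) = T_F$.

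Finally, because $\Phi$ is an isomorphism carrying $T_F$ onto $T_W$, it induces a well-defined homomorphism $\bar\Phi \colon F/T_F \to W/T_W$ on the quotients. This $\bar\Phi$ is surjective since $\Phi$ is, and it is injective since $\bar\Phi(x + T_F) = 0$ forces $\Phi(x) \in T_W$, hence $x \in \Phi^{-1}(T_W) = T_F$. Thus $\bar\Phi$ is the desired isomorphism from based Fox colorings to based Dehn colorings. I do not expect a genuine obstacle here: all of the topological content already lives in Theorem \ref{FoxDehn} (and in Lemma \ref{conservative} behind it), and the corollary is algebraic bookkeeping. The one point that must not be skipped is the compatibility $\Phi(T_F) = T_W$, which is precisely why the preceding remark identifying trivial colorings under integration is needed. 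Alternatively, one could argue directly on the canonical representatives described in the text, observing that a Fox coloring with basing arc colored $0$ integrates to a Dehn coloring assigning $0$ to both the unbounded region and its neighbor across that arc; but the quotient argument above is cleaner and avoids choosing representatives.
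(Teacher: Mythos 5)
Your proposal is correct and matches the paper's (implicit) argument exactly: the paper also deduces the corollary from Theorem \ref{FoxDehn} together with the observation that integration sends monochromatic Fox colorings to checkerboard Dehn colorings, so the isomorphism descends to the quotients by the trivial submodules. You have merely written out the standard quotient-module bookkeeping that the paper leaves to the reader.
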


%%%%%%%%%%%%%%%%%%%%%%%
\section{Alexander-Briggs colorings} In \cite{ab}, J.W. Alexander and G.B Briggs present a combinatorial method for computing certain homological knot invariants known as torsion numbers. There is a well-known relationship with the Fox (or Dehn) colorings of a knot and such torsion numbers. From it a third type of coloring arises, which we now describe. 

Alexander and Briggs did not use the {\sl trompe l'oeil} effect for depicting one arc crossing over another. Instead they depicted an oriented knot by its generic projection in the plane as a 4-valent graph, marking corners with small dots so that an insect crawling in the positive sense along the upper arc would always have the dotted corners on its right. (We have replaced ``lower arc" and ``on its left" by ``upper arc" and ``on its right" in Alexander's entomological convention \cite{alex} so that the well-known Fox $n$-coloring condition would not have to be altered.)  
Since it is likely that the authors got the pictorial idea from the referenced papers of Tait, we will refer to any such diagram as 
a {\it Tait diagram} of the knot.\footnote{At times, Tait went further by experimenting with two types of markers. He imagined them as silver and copper coins, inspiring a verse by Maxwell  \cite{cg}: 
\verse{\quad But little Jacky Horner\\
Will teach you what is proper,\\
   \quad So pitch him, in his corner,\\
Your silver and your copper.  }}

Let $D$ be Tait diagram, and $n$ any modulus. By a {\it vertex-coloring} we mean an assignment of colors $\{0, 1, \ldots, n-1\}$ (regarded modulo $n$) to the vertices of $D$.  A vertex-coloring is an {\it Alexander-Briggs $n$-coloring} if in each region the sum of of the colors of un-dotted vertices minus the colors of dotted vertices ---the {\it weighted vertex sum}---vanishes.  Figure \ref{AB_Listing} gives an example of an Alexander-Briggs 5-coloring of Listing's knot and illustrates Tait's dot-notation. 

\begin{figure}
\begin{center}
\vspace{-.15in}
\includegraphics[height=3 in]{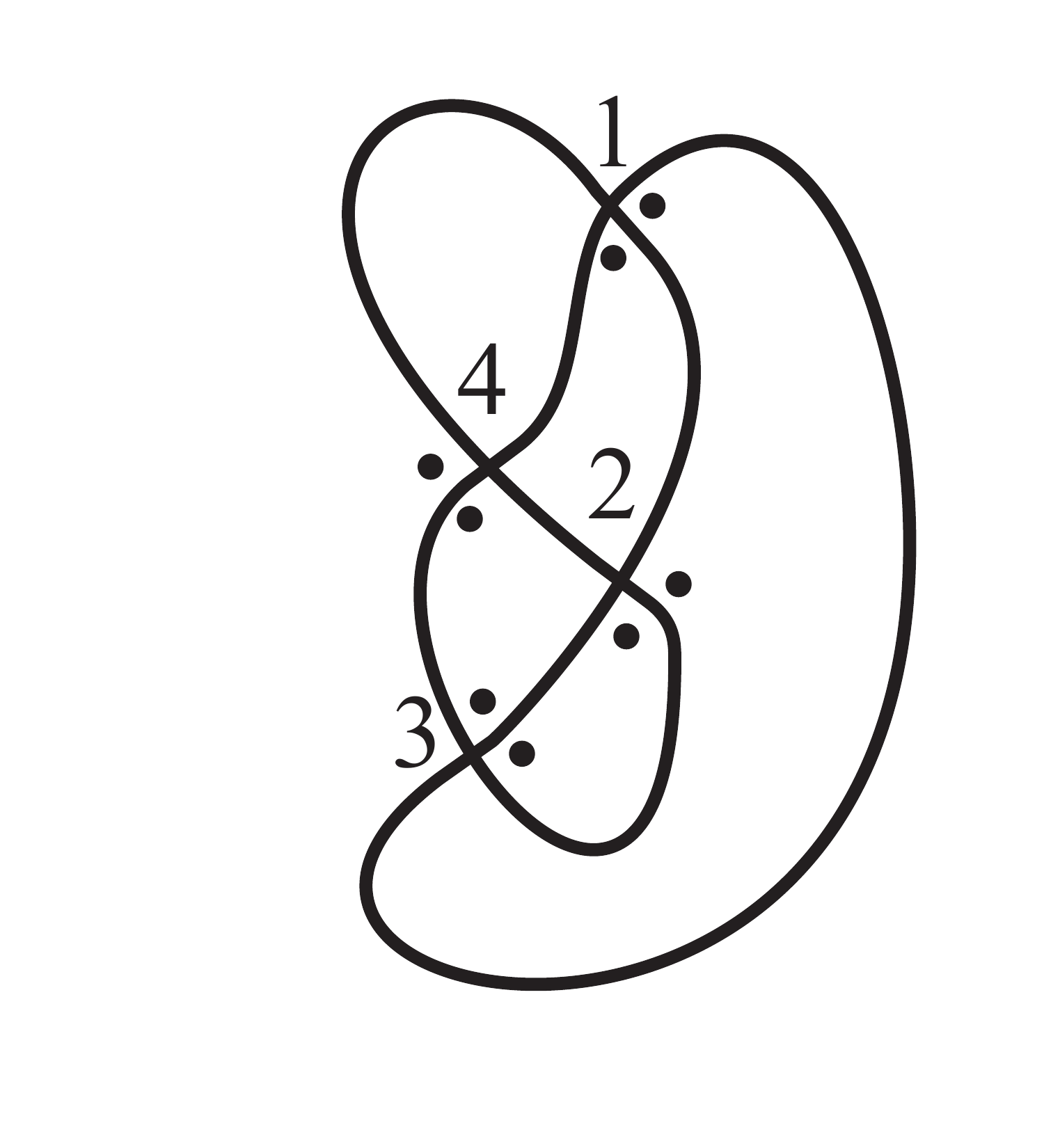}
\caption{Alexander-Briggs n-coloring}
\label{AB_Listing}
\end{center}
\end{figure}

Given a based Dehn $n$-coloring of a knot diagram, one obtains an Alexander-Briggs $n$-coloring. We explain this first for alternating diagrams. 

Consider a Dehn $n$-coloring $D$ of an alternating diagram of a knot $k$. We orient the diagram and let $\bar D$ be the resulting Tait diagram.  At each vertex there are two dotted regions and two undotted regions. Color the vertex with the color of either dotted region minus the color of the diagonally opposite undotted region. The Dehn coloring condition ensures that the result does not depend on which dotted region we use. 

In an alternating diagram we will see two types of regions: regions for which the overcrossing arc at each vertex of $D$ is the one on the left, as viewed from inside the region, and regions for which it is the one on the right.  (These two types of regions alternate in checkerboard fashion.)
 Let $R$ be a region of the first kind.  Let $R_i$, $i\in \Z/n$, be the adjoining regions in clockwise cyclic order, $a_i$ the Dehn color of $R_i$, and $v_i$ the vertex of $R$ between $R_{i-1}$ and $R_i$.  Then $R$ and $R_i$ are both dotted or both undotted at $v_i$, since the dots are on the same side of the overcrossing arc.  Hence the color of $v_i$ will be $a_i-a_{i-1}$ if $v_i$ is dotted, and $a_{i-1}-a_i$ if $v_i$ is undotted.  It follows immediately that the signed vertex sum for $R$ is zero.  For the other type of region, the argument is similar, with the signs all reversed.

If the diagram $D$ is not alternating, then identify a set ${\cal C}$ of crossings with the property that if the sense of each crossing in ${\cal C}$ is changed, then the diagram becomes alternating. (There are exactly two choices for the set ${\cal C}$, and they are complementary sets.)  Given a based Dehn $n$-coloring of $D$, determine colors  for each vertex of $\bar D$ as above, but multiply the color by $-1$ if the crossing is among those in ${\cal C}$.  Since changing a crossing changes whether $R$ shares its dot status with $R_i$ or with $R_{i-1}$, these sign changes are exactly what is needed to satisfy the Alexander-Briggs condition. Figure \ref{AB_9crossing} illustrates. The the crossings in ${\cal C}$ are circled. 

\begin{figure}
\begin{center}
\includegraphics[height=2.8 in]{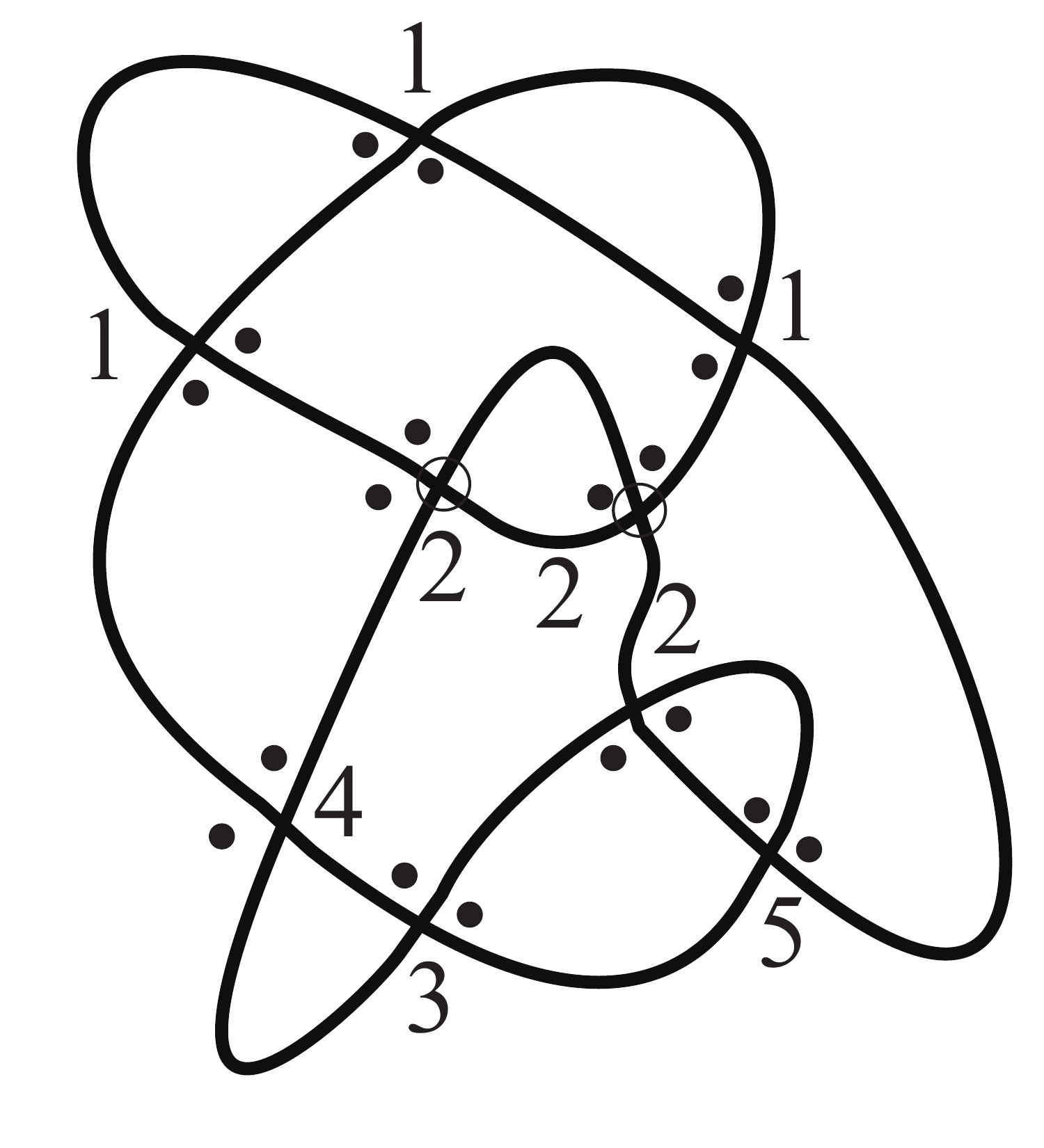}
\caption{Alexander-Briggs coloring of a nonalternating diagram}
\label{AB_9crossing}
\end{center}
\end{figure}

The map from the set of  Dehn $n$-colorings of $D$ to Alexander-Briggs $n$-colorings of $\bar D$ is obviously linear. Moreover, trivial (checkerboard) Dehn $n$-colorings are mapped to the constant-zero Alexander-Briggs $n$-coloring. Hence there is an induced map $\Phi$ from based Dehn $n$-colorings of $D$ to Alexander-Briggs $n$-colorings of $\bar D$. The kernel of $\Phi$ is easily seen to consist of only the constant-zero Dehn $n$-colorings of $D$. Hence $\Phi$ is a bijective correspondence. 

The alert reader will be concerned about the choice of orientation with which we began. If we reverse the orientation, then each of the vertex colors is replaced by its inverse, and hence $\Phi$ becomes $-\Phi$. In the case of a link, reversing the orientation of a component inverts colors at each vertex corresponding to overcrossings of the component.

We have shown: 

\begin{theorem}\label{DehnAlex}  Given any diagram of a knot, $\Phi$ is an isomorphism from the module of based Dehn $n$-colorings to the module of Alexander-Briggs $n$-colorings. \end{theorem}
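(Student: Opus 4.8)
The discussion preceding the statement already builds $\Phi$ and checks most of what is needed; the plan is to organize those checks into the standard verification that a linear map of $\Z/n\Z$-modules is an isomorphism, and to supply the one step---surjectivity---that is only asserted. I would first record well-definedness: the number attached to a crossing does not depend on which of its two dotted regions is chosen, by the Dehn relation $a+b=c+d$ there, and the signed vertex sum around each region vanishes, as shown region-by-region in the alternating case and repaired by the factors $-1$ on the crossings of ${\cal C}$ in general (Figure \ref{AB_9crossing}). Linearity over $\Z/n\Z$ is immediate from the formula, and a trivial checkerboard Dehn coloring yields the zero vertex coloring, so $\Phi$ is defined on the based module. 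For injectivity I would note that if all vertex colors vanish then diagonally opposite regions share a color at every crossing; since diagonally opposite regions lie in the same checkerboard class, each class is monochromatic, the coloring is trivial, and so it is zero in the based module.

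It remains to prove surjectivity, which I would do by constructing an explicit inverse $\Psi$ through an integration procedure dual to Lemma \ref{conservative}. The forward construction exhibits each vertex color $c_v$, up to the sign prescribed by ${\cal C}$, as the difference of the Dehn colors of the two diagonally opposite regions at $v$; reading this in reverse gives the propagation rule $a_{R'} = a_R \pm c_v$ whenever $R,R'$ are diagonally opposite at $v$. Because diagonally opposite regions lie in the same checkerboard class, these hops stay within a class, so I would choose one base point in each class---assigning $0$ to the unbounded region and to an adjacent bounded region---and then propagate colors throughout each class. The Alexander--Briggs condition, namely the vanishing of the signed vertex sum around each region, is precisely the assertion that the signed total of the $c_v$ around the elementary loop encircling that region is zero; this is the exact analogue of the equivalence ``conservative $\iff$ Fox coloring'' and is what makes the propagation path independent.

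The step I expect to demand the most care is this path independence, and I would establish it exactly as in Lemma \ref{conservative}: induct on the number of regions a closed loop encloses, peel off one elementary loop around a single region by the thin-ribbon reduction, and use the Alexander--Briggs identity for that region as the base case. The genuinely delicate bookkeeping is the placement of the signs coming from ${\cal C}$ in a non-alternating diagram; one must verify that the same $\pm 1$ that forces $\Phi$ into the Alexander--Briggs module also makes the reconstructed region colors obey the Dehn relation at every crossing, so that $\Psi$ indeed lands in the based Dehn module. Once $\Psi$ is known to be well defined, its $\Z/n\Z$-linearity is clear and the identities $\Psi\Phi=\mathrm{id}$ and $\Phi\Psi=\mathrm{id}$ reduce to a local check at each crossing, giving the asserted isomorphism. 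As a sanity check one could instead deduce surjectivity from a cardinality count, matching the rank of the based Dehn module supplied by Theorem \ref{FoxDehn} against the number of independent Alexander--Briggs conditions, but the explicit inverse is cleaner and mirrors the paper's earlier use of integration.
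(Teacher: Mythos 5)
Your proposal is correct, and up through injectivity it coincides with the paper's own argument: the same crossing-by-crossing definition of $\Phi$ (dotted color minus diagonally opposite undotted color, with the sign $-1$ at crossings of ${\cal C}$), the same region-by-region verification, linearity, vanishing on checkerboard colorings, and a concrete version of the kernel computation that the paper merely calls ``easily seen'' (your argument via connectivity of the diagonal adjacency within each checkerboard class is exactly the right justification). Where you genuinely depart from the paper is surjectivity: the paper's proof stops at ``the kernel of $\Phi$ consists of only the constant-zero colorings'' and immediately concludes that $\Phi$ is a bijective correspondence, offering no surjectivity argument at all --- and since injectivity of a map between finite modules does not imply surjectivity, your instinct that this step needs to be supplied is exactly right. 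Your inverse $\Psi$ fills that gap, and the mechanism works as you describe: diagonal hops stay in one checkerboard class, the elementary closed loops of such hops are precisely the cycles of regions of one class surrounding a single region of the other class, and the Alexander--Briggs condition for that region is exactly the telescoping identity making those loops consistent; planarity then gives full path independence by the ribbon induction of Lemma \ref{conservative}. The sign bookkeeping you flag also resolves cleanly: the Dehn relation at a crossing is equivalent to the two diagonal differences there being equal, and since $\Psi$ imposes both diagonal relations with the same vertex color and the same sign, the reconstructed region coloring satisfies the Dehn condition automatically. The one weak spot is your closing aside: a cardinality count via Theorem \ref{FoxDehn} would not work as stated, since that theorem compares Fox and Dehn colorings only and gives no control on the size of the Alexander--Briggs module; the explicit inverse is not merely cleaner but, with the paper's tools, necessary. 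In short, your proof is a strictly more complete version of the paper's.
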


\section{Taking knot colorings in other directions}

The three approaches to knot coloring that we have described are only part of the tale. As Fox well understood, knot colorings pack information about the homology groups of 2-fold branched cyclic covers of $\S^3$ branched over the knot. But yet there is more to explore. We mention two relatively recent directions.

Fox $n$-colorings are a special case of quandle colorings. A {\it quandle} is a
set with a binary operation that satisfies three axioms that
correspond to the Reidemeister moves. On the set ${\mathbb Z}/n$, for example, the operation $ a\triangleleft b= 2 b -a \pmod{n}$ defines a quandle.
The idea of using quandle colorings to detect knotting
was introduced in Winker's dissertation \cite{winker}
and subsequently discussed by Kauffman and Harary \cite{hk}. 
The theory is taken further in \cite{CJKLS}.

%Quandle
%cocycle invariants as introduced in \cite{CJKLS} can be used to group
%different colorings into classes that depend upon the cocycle value
%that is taken on the coloring. Such invariants can be used to
%determine geometric properties of the knots, and they generalize to
%create invariants for higher dimensional knots.

For any $n$, the group of colors $\Z/n\Z$ embeds naturally in the {\it additive circle group} $\T = \R/\Z$. Why not extend our palette of colors to the entire circle? This is the main idea of \cite{sw1}, \cite{sw2}. The set of Fox $\T$-colorings of a knot turns out to be a compact abelian group, and conjugation in the knot group by a meridian induces a homeomorphism. Suddenly we enter the world of algebraic dynamics, where new invariants such as periodic point structure and topological entropy arise.

Fox, Dehn and Alexander-Briggs $n$-colorings help to organize knot information in stimulating ways, just as knot diagrams themselves do.
The charm that they hold for both students and researchers makes it likely that they will continue to inspire novel perspectives about knots for years to come.

 %%%%%%%%%%%%%%%%%%%%%%%%%

 \bigskip

\ni Department of Mathematics and Statistics,\\
\ni University of South Alabama\\ Mobile, AL 36688 USA\\
\ni Email: \\
\ni carter@southalabama.edu\\
\ni  silver@southalabama.edu\\
\ni swilliam@southalabama.edu
\end{document}